\newtheorem{theorema}{Теорема}
\newtheorem{claim}{Утверждение}
\newtheorem{ddef}{Определение}
\newtheorem{example}{Пример}
\newtheorem{lemma}{Лемма}
\newtheorem{lemm_conseq}{Следствие}
\newenvironment{proof}{\par\noindent{\bf Доказательство.}}{\hfill$\scriptstyle\blacksquare$}
\renewcommand \thesection {\@arabic\c@section.}
\renewcommand\thesubsection {\thesection\@arabic\c@subsection.}
\renewcommand\thesubsubsection {\thesubsection\@arabic\c@subsubsection.}
\renewcommand\theparagraph {\thesubsubsection\@arabic\c@paragraph.}
\begin{document}
\selectlanguage{russian}

\title{
{\small \begin{flushright}
УДК 519.714
\end{flushright}}
Нижние оценки сложности покрытия почти всех точек булева куба гранями и смежные задачи
}

\author{Ю.~В. Максимов\thanks{Исследование выполнено при финансовой поддержке РФФИ в рамках научного проекта \No\,14-07-31277 мол\_а; а также при частичной поддержке Лаборатории структурных методов анализа данных в предсказательном моделировании ФУПМ МФТИ, грант правительства РФ дог. 11.G34.31.0073.
}\\
ПреМоЛаб МФТИ и ИППИ РАН}
\date{}

\maketitle

\begin{abstract}
В настоящей работе рассматриваются задача покрытия гранями всех точек булева куба, кроме не более чем логарифмического по размерности множества заданных, которые покрыты быть не должны. Указанная задача возникает в некоторых задачах машинного обучения, сложности и построения оптимальных булевых схем.

В работе решается задача определения конфигурация точек, для покрытия которых требуется наиболее сложное в различных смыслах множество граней.

\noindent
\textbf{Ключевые слова:}
покрытие множеств, булева функция, дизъюнктивная нормальная форма, нижние оценки сложности.
\end{abstract}

\section{Введение}
В настоящей работе рассматриваются задача покрытия гранями всех точек булева куба кроме заданной конфигурации точек. Наибольший интерес, мотивированный практическими приложениями, при этом представляет случай, когда число непокрытых точек достаточно мало.

Указанная задача возникает в ряде задач машинного обучения, дискретной математики и комбинаторной оптимизации \cite{boros11,zh78}.

Задача может быть переписана в эквивалентной форме с точки зрения построения простых дизъюнктивнх нормальных форм (ДНФ) булевых функций. А именно, в настоящей работе исследуются булевы функции, заданные в конъюнктивной нормальной форме (КНФ) произведениями вида
\begin{gather}\label{general_eq}
    f(x_1,x_2,...,x_n) = \bigwedge\limits_{i=1}^k \left( x_1^{\alpha_i^1} \vee x_2^{\alpha_i^2} \vee ...\vee x_n^{\alpha_i^n}\right).
\end{gather}

Рассматриваемая задача представляет собой частный случай задачи исключения нулевых точек из ДНФ (DNF Exception problem). Указанная задача была подробно рассмотрена в работах \cite{mu06,bp92}. Проблема исключения нулевых точек из ДНФ может быть сформулирована следующим образом: ДНФ $D$ из $m$ пересекающихся конъюнкций необходимо преобразовать к ДНФ минимальной сложности таким образом, что бы полученная ДНФ $D'$ совпадала с $D$ во всех точках кроме заданных $k$, на которые $D'$ должна обращаться в ноль. В данной работе мы будем рассматривать только оценку числа литералов в ДНФ булевой функции и единственной исходной конъюнкции тождественно равной единице. В настоящей работе нас будут интересовать только нижние оценки на ранг полученной ДНФ для случая логарифмического по размерности числа нулей $k$.

Целью работы является в поиске таких функций, сложность представления которых в классе ДНФ будет наибольшей.

Задача минимизации сложности функции \ref{general_eq} состоит в том, что бы записать указанную функцию в дизъюнктивной нормальной форме (ДНФ) с использованием как можно меньшего числа логических произведений (конъюнкций), а также символов переменных и их отрицаний (литералов). \emph{Длиной} ДНФ называется число входящих в нее конъюнкций, \emph{рангом} ДНФ называется число использованных в её записи литералов. Дизъюнктивная нормальная форма минимального ранга называется \emph{минимальной}, а минимальной длины --- \emph{кратчайшей}. С точки зрения построения оптимальных покрытий, ДНФ минимального длины соответствует покрытие использующее минимальное число граней; ДНФ минимального ранга соответствует минимальное вес совокупности граней, где в качестве веса грани выступает разность между размерностью функции и размерностью грани.

Задача минимизации ДНФ булевой функции с заданным множеством нулевых точек может быть рассмотрена как частный случай $NP$-трудной задачи о покрытии множеств, при котором множество единиц функции следует покрыть наиболее простым множеством граней булева гиперкуба (hypercube covering problem).

В общем случае, для произвольных $k$ задача восстановления минимальной (кратчайшей) ДНФ булевой функции по заданному представлению \ref{general_eq} $NP$-полна \cite{mas79}. Более того, задача определения по заданной ДНФ является ли она минимальной является еще более сложной переборной задачей, а именно принадлежит классу $\Sigma_2^p$ \cite{um98}.

Тем не менее, при небольших значениях $k$, указанная задача в ряде случаев может быть решена точно. В частности точное решение при $k=2$ для всякой функции $f$ было найдено С. В. Яблонским, при $k=3,4$ А.~Ю.~Коганом и Ю.~И.~Журавлёвым. Кроме того, ими же в конце 1980-ых годов было замечено, что при $k\le \log_2 - \log_2\log_2 n + 1$ почти всем булевым функциям от $n$ переменных, при $n\rightarrow\infty$, может быть сопоставлена некоторая функция $\phi_k$, полностью определяющая сложность минимальных и кратчайших ДНФ этих функций \cite{zk85dan}. В той же работе указанная функция была названа \emph{полной}.

До недавнего времени оставался открытым вопрос, является ли функция $\phi_k$ наиболее сложной в классе функций от того же числа переменных, принимающих значение $0$ ровно в $k$ точках.

Частичный, ответ на этот вопрос был анонсирован автором в кратком сообщении \cite{me12b}.

В настоящей работе приводиться усиление результата \cite{me12b}, позволяющее устанавливать более точные границы на сложность ДНФ булевых функций, обращающиеся в 0 в небольшом числе точек. Предлагаемый метод основан на идее анализа покрытия гранями куба не всех точек, на которых рассматриваемая функция функция принимает значение 1, а только их небольшой части ``трудной'' для покрытия множествами указанной формы.
Указанный способ позволяет существенно усилить классические Шенноновские оценки сложности ДНФ и является одним из первых, позволяющих аналитически установить явные нижние оценки сложности для ДНФ конкретных функций.

\section{Предварительные результаты, обозначения, терминология}

Основным объектом рассмотрения является матрица нулей $M_f$ булевой функции $f(x_1,x_2,...,x_n)$, по строкам которой последовательно выписаны все различные нули функции $f$. В настоящей работе получены нижние оценки ранга булевых функций, матрицы нулей которых не содержат столбцов с небольшим числом нулей или единиц. Указанные оценки оказываются полезными при исследовании булевых функций от большого числа переменных, имеющих небольшое число нулевых точек.

Пусть $\pi_n$ --- класс (инвариантных) преобразований Шеннона-Поварова булевых функций от $n$ переменных таких, что всякое преобразование $\pi \in \pi_n$ сопоставляет упорядоченному набору переменных $(x_1,x_2,...,x_n)$ набор $(y_1,y_2,...,y_n) = (x_{i_1}^{\sigma_{i_1}},x_{i_2}^{\sigma_{i_2}},...,x_{i_n}^{\sigma_{i_n}})$, где $(i_1,i_2,...,i_n)$ --- произвольная перестановка множества $\{1,2,...,n\}$, обозначаемого в дальнейшем $[n]$, $\sigma_i \in \{0,1\}$ при $i\in[n]$. Отметим, что булева функция и ее образ при преобразовании Шеннона-Поварова имеют одинаковую сложность при представлении их дизъюнктивными нормальными формами.

Обозначим $P^n_k$ класс булевых функций от $n$ переменных, имеющих в точности $k$ различных нулей. Произвольной функции $f \in P_k^n$ сопоставим функцию $g \in P_k^l$ таким образом, что
\begin{enumerate}
\item   в матрице $M_f$ отсутствуют нулевой и единичный столбцы;
\item   одинаковые столбцы в $M_g$ расположены последовательно;
\item   для любой пары столбцов, один из которых является отрицанием другого, в матрице $M_g$ присутствует не более одного.
\end{enumerate}

Заметим, что инвариантными преобразованиями Шеннона-Поварова всякая булева функция, матрица нулей которой не имеет нулевых (единичных) столбцов, может быть записана в таком виде. При этом сложность ДНФ функции не меняется. Булева функция $g$ называется \textit{правильной}, если ее матрица нулей удовлетворяет условиям~$1-3$. Если кроме того матрица нулей $g$ не содержит одинаковых столбцов, то $g$ называется \textit{приведенной}.

Приведенную функцию, принадлежащую классу $P_k^{2^{k-1}-1}$, назовем \textit{полной}. 
Согласно результату, полученному в работах \cite{zk85dan,zk86}, всякая правильная булева функция $\phi$ представима в виде
\begin{eqnarray}\label{formula_reduction}
    D_{\phi}(x_1, x_2, ..., x_n) = D_F(x_{i_1}, x_{i_2}, ...x_{i_t}) \vee D_2(x_{i_1}, x_{i_1+1}, ..., x_{i_2-1}) \vee D_2(x_{i_t}, x_{i_t+1}, ..., x_{i_{t+1}-1})
\end{eqnarray}
где $i_1 = 1$, а $i_{t+1} = n+1$; столбцы с номерами $i_s, i_{s+1}, ..., i_{s+1}-1$ равны при $1\le s \le t$; набор столбцов с номерами $i_1,i_2, ..., i_t$ состоит в точности из $t$ различных столбцов. Здесь $D_F $ в формуле (\ref{formula_reduction}) --- ДНФ соответствующей $\phi$ приведенной функции $F$, а $D_2(x_{i_s}, x_{i_{s+1}}, ..., x_{i_{s+1} - 1}) = x_{i_s} \bar{x}_{i_s + 1} \vee x_{i_s+1} \bar{x}_{i_s + 2} \vee ... \vee x_{i_{s+1}-1} \bar{x}_{i_s}$ при $1\le s \le t$. Отметим, что указанным способом можно сводить построение ДНФ произвольной булевой функции к построению ДНФ соответствующей приведенной функции,  при этом сложность сведения линейна.
\begin{theorema}
Почти всем правильным булевым функциям от $n$ переменных, которые имеют не более $k < \log_2 n - \log_2 \log_2 n + 1$ нулей, при $n\rightarrow\infty$ по формуле \ref{formula_reduction} ставиться в соответствие полная булева функция.
\end{theorema}

Несмотря на это, при числе нулей большем чем $\log_2 n - \log_2\log_2 n + 1$ большинству функций сопоставляются приведенные функции, отличные от полных.

Со времени появления теории сложности дизъюнктивных нормальных форм с малым числом нулей \cite{zk85dan,zk86}, оставался открытым вопрос является ли полная функция (асимптотически) экстремальной по сложности в своем классе. 

В настоящей работе показано, что при рассмотрении в качестве меры сложности числа литералов, содержащихся в ДНФ, указанная открытый вопрос разрешается отрицательно. А именно,
в классе $P_k^{2^{k-1}-1}$ существует булева функция, минимальная ДНФ реализация которой содержит более $5\cdot\frac{2^k}{3}(1+o(1))$ литералов при $k\rightarrow\infty$. В то же время в работе \cite{me12b} было установлено, что число литералов в минимальной ДНФ полной функции не превосходит $3\cdot2^{k-1}(1+o(1))$ при~$k\rightarrow\infty$.

\section{Определения и обозначения}
В данной работе использован ряд стандартных терминов теории булевых функций, определенных в работах \cite{ja10,dj01}. Кратко напомним основные из них.


Обозначим $M_{i_1,...,i_t}^{j_1,...,j_s}$ подматрицу матрицы $M$, находящуюся на пересечении строк $i_1,...,i_t$ со столбцами $j_1,...,j_s$. Матрицу, не содержащую одинаковых строк, назовем \textit{тестом}.

Литералом будем называть булеву переменную или ее отрицание. Литерал $x_i$ булевой функции $f(x_1,x_2,...,x_n)$ ассоциируем со столбцом $M_{[k]}^i$ матрицы нулей функции $f$; литерал $\bar{x_i}$ ассоциируем с покоординатным отрицанием столбца $M_{[k]}^i, 1\le i \le n$. Далее под термином литерал $x_i^{\sigma_i}$ будет часто подразумеваться ассоциированный с ним булев вектор.

Положим $B_k$ --- множество всех булевых векторов размерности $k$; $B_k^0$ и $B_k^1$ --- множество булевых векторов, первая координата которых равна 0 или 1 соответственно; под вектором $\bar{x}$ будем понимать покоординатное отрицание вектора $x\in B_k$; минимум из числа единиц и числа нулей $x$ назовем \textit{весом} вектора $x, x\in B_k$.

Используем следующие понятия, определенные в работе \cite{me12b}:
\begin{ddef}
Ненулевой вектор $\alpha \in B_k$ назовем \textit{разложимым} по векторам $\alpha_1,\alpha_2, ...,\alpha_t$ если выполнены следующие равенства
\[
\alpha = \alpha_1 \vee \alpha_2 \vee ... \vee \alpha_t, \qquad <\!\bar{\alpha},\alpha_1\!> = <\!\bar{\alpha}, \alpha_2\!> = ... = <\!\bar{\alpha},\alpha_t\!> = 0,
\]
где под символом $<\!\alpha, \beta\!>$ понимается скалярное произведение векторов $\alpha$ и $\beta$.
\end{ddef}
\begin{ddef}
Ненулевой вектор $\alpha \in B_k$ назовем \textit{ортогонально разложимым} по векторам $\alpha_1, ...,\alpha_t$ если выполнены следующие равенства
\[\begin{cases}
\alpha = \alpha_1 \oplus \alpha_2 \oplus ... \oplus \alpha_t\\
\alpha = \alpha_1 \vee \alpha_2 \vee ... \vee \alpha_t
\end{cases}\]
\end{ddef}

Обозначим $\mathbb{I}_k$ вектор размерности $k$, состоящий только из единиц. В случае $\alpha = \mathbb{I}_k$ в условиях предыдущего определения назовем вектора $\{\alpha_i\}_{i=1}^t$ \textit{ортогональным разложением единицы}. Заметим, что если вектор $\alpha$ разложим по векторам $\{\alpha_i\}_{i\in I}$, то $\chi(\alpha) > \chi(\alpha_i), i\in I$.

Конъюнкция $K$ называется \textit{импликантой} функции $f(x_1,x_2,...,x_n)$, если $N_K \subseteq N_f$. Импликанта $K$ называется простой, если из $K$ не может быть вычеркнут ни один литерал так, чтобы полученная конъюнкция была импликантой $f(x_1,x_2,...,x_n)$. Термины \textit{простая импликанта} и \textit{несократимая конъюнкция} употребляются в работе как синонимы.

Обозначим $Z_f$ --- множество нулей функции $f$, а $N_f$ --- множество ее единиц. Определим вектора $\{e_i^k\}_{i=1}^{k}$ равенствами $e_{i+1}^k = \chi^{-1}(2^i)$, $0~\le~i~<~k$.

Пусть $E(t) = \{i:M_i^t = 1\}$, $Z(t) = [k]\setminus E(t)$. Отметим, что для приведенной функции $Z(t)\neq\varnothing$ и $E(t)\neq\varnothing$. Следуя работе \cite{dj01}, обозначим $\tilde{\theta}(i,j) = (\theta_1,\theta_2,...,\theta_n)$ точку булева куба такую, что $\theta_r = M_i^r$ при всех $r\in[n]\setminus \{j\}$ и $\theta_j = 1 - M_i^j$.

Далее будем полагать, что число единиц каждого столбца матрицы нулей всякой рассматриваемой далее функции не превосходит числа нулей. Отметим, что всякая булева функция может быть приведена к такому виду преобразованиями Шеннона-Поварова.

Для конъюнкции $K$ обозначим $rank^{+}\; K$ число положительных литералов конъюнкции; $rank^{-}\; K$ число отрицательных литералов; \[rank\;K = rank^{+}\;K + rank^{-}\;K.\]

В работе \cite{dj01} введено следующее определение
\begin{ddef}
Множество
\[\bigcup\limits_{i=1}^k\bigcup\limits_{j=1}^n \{\tilde\theta(i,j)\}\setminus \bar{N}_f\]
назовем \textit{множеством околонулевых точек}.
\end{ddef}

Обозначим указанное множество $\Theta$. Положим
\[\Theta^{1} = \bigcup\limits_{i=1}^k\bigcup_{\substack{M_i^j = 1\\ j\in \{1,2,...,n\}}} \{\tilde\theta(i,j)\}\setminus \bar{N}_f
\qquad \Theta^{0} = \bigcup\limits_{i=1}^k\bigcup_{\substack{M_i^j = 0\\ j\in \{1,2,...,n\}}} \{\tilde\theta(i,j)\}\setminus \bar{N}_f\]

А.~Г.~Дьяконовым в той же работе была доказана
\begin{lemma}
Для всех элементарных конъюнкций $K\in D^{\text{сокр}_f}$ ($D^{\text{сокр}_f}$ --- сокращенная ДНФ функции $f$) и всех $i \in [k]$ справедливо
\[\left|N_K \cap \bigcup_{j=1}^n \tilde\theta(i,j)\right| \le 1\]
\end{lemma}

\section{Нижняя оценка сложности}
\textit{Соседними} назовем булевы вектора одинаковой размерности, находящиеся на расстоянии 1 в метрике Хемминга. Рассмотрим произвольную приведенную булеву функцию $f(x_1,...,x_n)$, не имеющую соседних нулевых точек; обозначим $M_f$ ее матрицу нулей и зафиксируем ее произвольную ДНФ $D$.

Будем считать, что всякая рассматриваемая далее конъюнкция несократимая, то есть является простой импликантой $f$. Аналогично будем полагать, что всякая рассматриваемая в этом параграфе ДНФ состоит исключительно из простых импликант. Кроме того, будем полагать, что каждая рассматриваемая далее матрица является тестом.

Определим класс приведенных булевых функций $\Phi^{n,k}$ как множество функций от $n$ переменных, имеющих $k$ нулей, никакие два из которых не являются соседними. Скажем, что функция $f$ принадлежит классу $\Phi^{n,k}_\lambda$, если $f\in \Phi^{n,k}$ и кроме того, все столбцы, составляющие матрицу нулей $M_f$, имеют вес не меньше $\lambda$. Кроме того, будет считать, что число единиц в каждом столбце матрицы $M_f$ не превосходит числа нулей.

Введем важное для последующих рассуждений определение из работы \cite{me12b}
\begin{ddef}
\textit{Разрезанием} теста $M$ на $t, t\ge1$ частей, назовем совокупность непустых подматриц $\{M_i\}_{i=1}^t$ таких, что
\begin{enumerate}
\item Матрицы $M_1, M_2,...,M_t$ имеют одинаковое число столбцов;
\item каждая строка матрицы $M$ присутствует хотя бы в одной матрице $M_j, 1\le j \le t$;
\item число строк матрицы $M$ совпадает с суммой числа строк в матрицах $\{M_i\}_{i=1}^t$.
\end{enumerate}
\end{ddef}

\begin{lemma}\label{literal_1}
Всякий литерал функции $f$ входит не менее чем в одну конъюнкцию.
\end{lemma}
\begin{lemma}\label{literal_coverage}
Пусть литерал $x_i^{\sigma_i}$ разложим по литералам $x_{i_1}^{\sigma_{i_1}\oplus1}, x_{i_2}^{\sigma_{i_2}\oplus1},...$, $x_{i_t}^{\sigma_{i_t}\oplus1}$.
Пусть $\beta = (\beta_1,\beta_2,...,\beta_n)$ --- такая точка булева куба, что $\beta_i = \sigma_i, \beta_{i_1} = \sigma_{i_1}, \beta_{i_2} = \sigma_{i_2},..., \beta_{i_t} = \sigma_{i_t}$, тогда конъюнкция $K~=~x_i^{\sigma_i}x_{i_1}^{\sigma_{i_1}}x_{i_2}^{\sigma_{i_2}}\cdots x_{i_t}^{\sigma_{i_t}}$ является допустимой для $f$ и, более того, $K[\beta] =1$.
\end{lemma}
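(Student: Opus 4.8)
The plan is to push everything into the column language of literals and reduce both assertions to coordinatewise manipulations in $B_k$; throughout write $c(\ell)$ for the column of a literal $\ell$, so that $c(x_j^{\sigma_j\oplus1})=\overline{c(x_j^{\sigma_j})}$.

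First I would record the working criterion for being an implicant in this class. Each clause $x_1^{M_i^1}\vee\dots\vee x_n^{M_i^n}$ of the CNF \ref{general_eq} is falsified at exactly one point, the componentwise complement of the $i$-th row of $M_f$; hence $Z_f$ is the set of these (at most $k$) points, and an elementary conjunction $K=\bigwedge_{j\in J}x_j^{\sigma_j}$ is an implicant of $f$ iff its set of ones avoids all of them. Reading this row by row, $K$ is an implicant iff for every $i\in[k]$ at least one literal of $K$ ``saves'' row $i$, that is, iff the componentwise disjunction of the columns $c(x_j^{\sigma_j})$, $j\in J$, equals $\mathbb{I}_k$ --- the column-language statement that these columns cover $[k]$.

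To prove that $K=x_i^{\sigma_i}x_{i_1}^{\sigma_{i_1}}\cdots x_{i_t}^{\sigma_{i_t}}$ is an implicant I would invoke the decomposition hypothesis on $x_i^{\sigma_i}$. Its orthogonality clause gives $\langle\overline{c(x_i^{\sigma_i})},\,c(x_{i_1}^{\sigma_{i_1}\oplus1})\rangle=0$, i.e. $c(x_{i_1}^{\sigma_{i_1}\oplus1})\le c(x_i^{\sigma_i})$ coordinatewise; complementing both sides (which also replaces $x_{i_1}^{\sigma_{i_1}\oplus1}$ by $x_{i_1}^{\sigma_{i_1}}$) yields $\overline{c(x_i^{\sigma_i})}\le c(x_{i_1}^{\sigma_{i_1}})$, whence $c(x_i^{\sigma_i})\vee c(x_{i_1}^{\sigma_{i_1}})=\mathbb{I}_k$ already from two of the columns. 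A fortiori the disjunction of the columns of all literals of $K$ equals $\mathbb{I}_k$, so $K$ is an implicant by the criterion above. (One may equally use the disjunction clause of the decomposition: by de Morgan $\overline{c(x_i^{\sigma_i})}$ is the componentwise conjunction of $c(x_{i_1}^{\sigma_{i_1}}),\dots,c(x_{i_t}^{\sigma_{i_t}})$, which lies below their disjunction, and one finishes as before.)

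Finally $K[\beta]=1$ is immediate: an elementary conjunction takes the value $1$ at a point precisely when the point agrees with each of its literals, and $\beta$ was chosen so that $\beta_i=\sigma_i$ and $\beta_{i_s}=\sigma_{i_s}$ for $s=1,\dots,t$. I do not expect a real obstacle here; the only thing demanding care is the sign bookkeeping --- the decomposition is phrased through the complemented literals $x_{i_s}^{\sigma_{i_s}\oplus1}$, whereas $K$ and the admissible $\beta$ are phrased through the uncomplemented $x_{i_s}^{\sigma_{i_s}}$ --- together with the tacit assumption that $i,i_1,\dots,i_t$ are pairwise distinct, so that $K$ is a genuine elementary conjunction.
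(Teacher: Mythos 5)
Your proof is correct. Note that the paper states Lemma~\ref{literal_coverage} without any proof (it is carried over from \cite{me12b} together with the covering terminology), so there is no in-text argument to compare yours with; your verification --- the zeros of $f$ are exactly the componentwise complements of the rows of $M_f$, hence an elementary conjunction is an implicant iff the columns of its literals jointly cover $\mathbb{I}_k$, which the covering hypothesis guarantees, while $K[\beta]=1$ is immediate from the choice of $\beta$ --- is precisely the routine computation that the paper's column framework presupposes. A minor remark: you only use the consequence that the column of $x_{i_1}^{\sigma_{i_1}\oplus 1}$ lies componentwise below the column of $x_i^{\sigma_i}$, so your argument actually shows that already $x_i^{\sigma_i}x_{i_1}^{\sigma_{i_1}}$ is an implicant and $K$ only a fortiori; this is a harmless strengthening (the full disjunction equality of the covering is simply not needed for this particular lemma), and your closing caveats about sign bookkeeping and the distinctness of $i,i_1,\dots,i_t$ are exactly the right points to watch.
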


Конъюнкцию $K$ в условиях леммы назовем конъюнкцией определяемой разложением $x_i^{\sigma_i}$ по векторам $x_{i_1}^{\sigma_{i_1}\oplus1}, x_{i_2}^{\sigma_{i_2}\oplus1},...,x_{i_t}^{\sigma_{i_t}\oplus1}$. В работе \cite{me12b} была доказана ключевая для дальнейших рассуждений лемма о \textit{числе минимальных литералов}

Рассмотрим приведенную булеву функцию $f(x_1,...,x_n)$ заданную матрицей нулей $M_f$ и не имеющую смежных нулевых точек. Зафиксируем ее литерал $x_i^{\sigma_i}, \sigma_i \in\{0,1\}$ и некоторую реализующую ее ДНФ $D$. Справедливы следующие утверждения, доказательства которых приведены далее

\begin{lemma}\label{literal_min_number_a}
Выделим в $D$ конъюнкции, содержащие литерал $x_i^{\sigma_i}$ в количестве $t$ штук. Литерал $x_i^{\sigma_i}$ входит еще как минимум в одну, отличную от выделенных, конъюнкцию из $D$, если при любом разрезании матрицы $M_f$ на $t$ частей $M_1,M_2,...,M_t$ существует такая матрица $M_j$, что для функции $\phi_j$ заданной матрицей $M_j$, ни одна из выделенных конъюнкций не определяет разбиение $x_i^{\sigma_i}$.
\end{lemma}
\begin{lemma}\label{literal_min_number_b}
Литерал $x_i^{\sigma_i}$ входит не менее чем в $t+1$ конъюнкцию каждой ДНФ функции $f$, если при любом разрезании матрицы $M_f$ на $t$ частей $M_1,M_2,...,M_t$, существует такая матрица $M_j$, что для функции $\phi_j$, заданной матрицей $M_j$, не существует набора литералов, образующего разбиение литерала $x_i^{\sigma_i}$.
\end{lemma}
Кроме того, заметим что если при разрезании матрицы $M$ на части $M_1,M_2,...,M_t$ для каждой из функций, определяемых матрицами $M_1,M_2,...,M_t$, существует (свое для каждой части) разбиение литерала $x_i^{\sigma_i}$, порождающее при этом допустимую конъюнкцию $f(x_1,x_2,...,x_n)$, то существует ДНФ $\hat D$ функции $f$, такое что литерал $x_i^{\sigma_i}$ входит не более чем в $t$ различных конъюнкций содержащихся в $\hat D$.

\begin{lemm_conseq}
Литерал $x_i^{\sigma_i}$ входит не менее чем в две конъюнкции произвольной ДНФ~$D$, если отрицания литералов, содержащихся в конъюнкции $K\setminus x_i^{\sigma_i}$, при $K\ni x_i^{\sigma_i}$, не образуют разбиение~$x_i^{\sigma_i}$.
\end{lemm_conseq}
\begin{ddef}
Литерал $x_i^{\sigma_i}$ назовем \textit{собственным} литералом конъюнкции $K$ дизъюнктивной нормальной формы $D$ функции $f(x_1,...,x_n)$, если $K$ --- единственная конъюнкция $D$, содержащая $x_i^{\sigma_i}$.
\end{ddef}


Рассмотрим фильтрацию $\Phi^{n,k}_{k/2} \subseteq \Phi^{n,k}_{k/2-1} \subseteq \Phi^{n,k}_{k/2-2} \subseteq ... \subseteq \Phi^{n,k}_{[k/3]+1}$. Каждое из последующих утверждений, доказанное для функций класса $\Phi^{n,k}_{m}$, будет справедливо для всех непустых вложенных классов фильтрации.

Докажем следующие утверждения о структуре конъюнкций входящих в ДНФ реализации функций из класса $f\in \Phi^{n,k}_{[k/3]+1}$. Для определенности зафиксируем функцию $f$. Из того, что функция $f$ приведенная следует
\begin{claim}
Конъюнкции ранга два не имеют собственных литералов.
\end{claim}

\begin{claim}\label{conj_structure_own_plus}
Всякая конъюнкция, 
имеющая один собственный положительный литерал $x_i$, представима в виде $x_i K_a$, где $K_a$ --- антимонотонная конъюнкция ранга не менее 2, отрицания литералов которой образуют разбиение $x_i$.
\end{claim}
\begin{proof}
Рассмотрим конъюнкцию $K = x_i x_{i_1}^{\sigma_{i_1}}x_{i_2}^{\sigma_{i_2}}\cdots x_{i_t}^{\sigma_{i_t}}$. Так как литерал $x_i \in K$ собственный, то литералы $x_{i_1}^{1 - \sigma_{i_1}}$, $x_{i_2}^{1 - \sigma_{i_2}}$, ...,$x_{i_t}^{1 - \sigma_{i_t}}$ образуют разложение $x_i$. В силу того, что $f$ приведенная и число единиц во всяком столбце матрицы нулей $f$ не превосходит числа нулей, то в разложение $x_i$ могут входить только положительные литералы. Следствием этого является утверждение леммы.
\end{proof}

Конъюнкции представимые в виде $x_i K_a$, где $K_a$ --- некоторая антимонотонная конъюнкция, такая что $x_iK_a$ не имеет собственных литералов, кроме, быть может, $x_i$, назовем конъюнкциями класса $\mathbb{K}_1$.

\begin{claim}\label{conj_structure_own_minus}
Всякая конъюнкция, 
имеющая один собственный отрицательный литерал $\bar{x}_i$, представима в одной из следующих форм
\begin{enumerate}
\item[$a.$] $K = \bar{x}_i \bar{x}_{i_1}\bar{x}_{i_2}\cdots\bar{x}_{i_{t-1}}$;
\item[$b.$] $K = \bar{x}_i {x}_{i_1}\bar{x}_{i_2}x_{i_3}^{\sigma_{i_3}}x_{i_4}^{\sigma_{i_4}}\cdots x_{i_{t-1}}^{\sigma_{i_{t-1}}}$, $\sigma_{i_j}\in\{0,1\}$;
\item[$c.$] $K = \bar{x}_i {x}_{i_1}{x}_{i_2}x_{i_3}^{\sigma_{i_3}}x_{i_4}^{\sigma_{i_4}}\cdots x_{i_{t-1}}^{\sigma_{i_{t-1}}}$, $\sigma_{i_j}\in\{0,1\}$,
\end{enumerate}
при этом отрицания литералов из $K\setminus \bar{x}_i$ образуют разложение $\bar{x_i}$, а $t\ge 3$
\end{claim}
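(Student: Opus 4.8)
\noindent\emph{Proof plan.} The argument runs parallel to the proof of Claim~\ref{conj_structure_own_plus}, the new ingredient being a careful bookkeeping of the signs of the literals together with the weight bound built into $\Phi^{n,k}_{[k/3]+1}$. Write $K=\bar x_i\,x_{i_1}^{\sigma_{i_1}}x_{i_2}^{\sigma_{i_2}}\cdots x_{i_{t-1}}^{\sigma_{i_{t-1}}}$. Since $D$ is a dead-end irredundant DNF in which $\bar x_i$ is an own literal of $K$, the contrapositive of the corollary to Lemmas~\ref{literal_min_number_a} and \ref{literal_min_number_b} forces every conjunction obtained from $K\setminus\bar x_i$ by inverting its literals to cover $\bar x_i$; in particular the literals $x_{i_1}^{\sigma_{i_1}\oplus1},\dots,x_{i_{t-1}}^{\sigma_{i_{t-1}}\oplus1}$ cover $\bar x_i$. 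Reading this off the characteristic matrix via the definition of a covering: setting $v_j=M_{[k]}^{i_j}$ when $\sigma_{i_j}=0$ and $v_j=\overline{M_{[k]}^{i_j}}$ when $\sigma_{i_j}=1$, we obtain $\overline{M_{[k]}^i}=v_1\vee\dots\vee v_{t-1}$ with $\langle M_{[k]}^i,v_j\rangle=0$ for every $j$; this is precisely the covering statement of the claim, and it restricts to the own literals lying in $K\setminus\bar x_i$. Because $K$ is dead-end, each of its literals is essential, and since the vector $M_{[k]}^i$ attached to $\bar x_i$ is disjoint from all the $v_j$, essentiality of $x_{i_j}^{\sigma_{i_j}}$ amounts to $v_j$ containing a coordinate of $\overline{M_{[k]}^i}$ that lies in no $v_{j'}$, $j'\ne j$; so $\{v_j\}_{j=1}^{t-1}$ is an \emph{irredundant} decomposition of $\overline{M_{[k]}^i}$.

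Next, $t\ge3$. If $t=1$, then $K=\bar x_i$ is already an implicant, forcing the column $M_{[k]}^i$ to be the all-ones vector, which is impossible since columns of $M_f$ have weight at most $k-[k/3]-1$. If $t=2$, the decomposition above has a single term, so $\overline{M_{[k]}^i}$ equals another column of $M_f$ (case $\sigma_{i_1}=0$) or the complement of another column (case $\sigma_{i_1}=1$); equivalently $M_{[k]}^i$ coincides with some other column or with the complement of one. The general position of $M_f$, together with the standing assumption that no two of its rows coincide, rules out both, so $t\ge3$.

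Finally, with $t\ge3$ the rest is a trichotomy on the signs of the $t-1\ge2$ literals of $K\setminus\bar x_i$: if they are all negated we are in case $(a)$; otherwise relabel so that $x_{i_1}$ is positive, and then either all of the remaining $t-2\ge1$ literals are negated, giving case $(b)$, or at least one more is positive, and relabelling so that $x_{i_2}$ is positive too gives case $(c)$. The covering relation and the bound $t\ge3$ were already proved. The step I expect to demand the most care is the appeal to general position in eliminating $t=2$ (and in any sharpening of which sign patterns among $x_{i_3}^{\sigma_{i_3}},\dots$ actually occur): one must spell out which coincidences and short Boolean relations among columns of $M_f$, their complements, and small unions of these the general-position hypothesis forbids, and check that the irredundant decomposition of $\overline{M_{[k]}^i}$ built in the first step never meets a forbidden pattern — the same kind of estimate that was only sketched in the proof of Claim~\ref{conj_structure_own_plus}.
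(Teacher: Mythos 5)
Your overall route coincides with the paper's: the covering statement is extracted from the own-literal corollary to Lemmas \ref{literal_min_number_a}--\ref{literal_min_number_b} exactly as in Claim \ref{conj_structure_own_plus}, the degenerate values of $t$ are then to be excluded by general position, and the split into the forms $(a)$--$(c)$ is a mechanical sign trichotomy once $t\ge 3$ is known; the paper's own text for this claim is essentially the one-line remark that conjunctions of any other shape would yield coverings contradicting general position. The covering half of your argument is correct and is the same mechanism the paper uses.

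The weak point is the elimination of $t\le 2$, which together with the covering is the whole content of the claim, and there the facts you appeal to are not the ones available in the paper. The paper never assumes an upper bound $k-[k/3]-1$ on column weights (the class $\Phi^{n,k}_{[k/3]+1}$ imposes a lower bound; the upper bound actually used later, e.g.\ in Claim \ref{conj_class_1}, is $k/2$), and ``no two rows coincide'' is beside the point: in the subcase $K=\bar x_i\bar x_{i_1}$ the covering forces the columns to be complementary, $M_{[k]}^{i_1}=\overline{M_{[k]}^{i}}$, and neither distinctness of rows nor pairwise non-covering of columns forbids this (both columns may have weight exactly $k/2$). What does forbid it is the general-position property that any two columns of $M_f$ share a unit coordinate --- equivalently, that $\bar x_i\bar x_j$ is never an implicant of $f$, which is precisely how the paper disposes of such conjunctions in Claim \ref{conj_structure_rank2}. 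You explicitly flag this verification as ``needing care'' but do not carry it out, so as written the $t\ge 3$ part is not yet proved; once the degenerate cases are referred to the correct column properties (equal columns excluded by mutual non-covering, complementary columns by the intersection/non-implicant property, the $t=1$ case by the weight bound), your argument closes and matches the paper's intended proof.
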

\begin{proof}
Фактически в лемме утверждается отсутствие конъюнкций ранга два, имеющих собственные литералы, что справедливо в силу того, что $f$ приведенная.
\end{proof}

Конъюнкции, представимые в виде $a$, определяемом леммой \ref{conj_structure_own_minus}, назовем конъюнкциями класса $\mathbb{K}_2$; класс $\mathbb{K}_3$ составляют конъюнкции, представимые в виде $b$, но не представимые в виде $a$; класс $\mathbb{K}_4$ состоит из конъюнкций, имеющих один собственный отрицательный литерал, но не принадлежащих ни классу $\mathbb{K}_2$, ни классу $\mathbb{K}_3$.

\begin{claim}\label{conj_structure_rank2}
Всякая конъюнкция ранга два 
представима в виде ${x}_i{x}_{j}$, или $x_i \bar{x}_{j}$ для некоторых $i,j$ и не имеет собственных литералов.
\end{claim}
\begin{proof}
Собственные литералы в конъюнкции ранга 2 отсутствуют так как $f$ приведенная; конъюнкции ранга два вида $\bar{x}_i \bar{x}_j$ не являются импликантами $f$ так как покрывают один из ее нулей.
\end{proof}

Конъюнкции, представимые в виде ${x}_i{x}_{j}$, составляют класс $\mathbb{K}_5$, а в виде $x_i\bar{x}_j$  --- класс $\mathbb{K}_6$.
\begin{lemma}
Всякая конъюнкция произвольной ДНФ реализации функции $f\in\Phi^{n,k}_{[k/3]+1}$ лежит в одном из классов $\mathbb{K}_1,\mathbb{K}_2,...,\mathbb{K}_6$.
\end{lemma}
\begin{proof}
Так как вес каждого литерала строго больше $k/3$ то ни одна конъюнкция не имеет двух и более собственных литералов.
\end{proof}

Зафиксируем далее некоторую функцию $f \in \Phi^{n,k}_{m} \subseteq \Phi^{n,k}_{[k/3]+1}$, и ее ДНФ реализацию $D$.

\begin{claim}\label{conj_class_1}
Для всякой конъюнкции $K\in\mathbb{K}_1$ выполнены неравенства
\[
\begin{cases}
\left|\Theta^0 \cap N_K\right| \le 2\Delta \\
\left|\Theta^1 \cap N_K\right| \le k/2 + \Delta \\
rank^{-}(K) \ge 2 \\
rank^{+}(K) = 1,
\end{cases}\]
где $\Delta = k/2 - m$.
\end{claim}
\begin{proof}
По определению класса $\mathbb{K}_1$ всякая конъюнкция $K$ представима в виде $x_i \bar{x}_{i_1} \bar{x}_{i_2} \cdots \bar{x}_{i_t}$ для некоторого $t$ и некоторых литералов $x_i, \bar{x}_{i_1}, \bar{x}_{i_2}, ..., \bar{x}_{i_t}$. Так как $x_i$ при этом является собственным литералом, то литералы $x_{i_1}, x_{i_2}, ..., x_{i_t}$ образуют разбиение $x_i$. Максимальный вес литерала $x_i$ не превосходит $k/2$; минимальный $k/2 - \Delta$, откуда следуют ограничения на $\left|\Theta^0 \cap N_K\right|$ и $\left|\Theta^1 \cap N_K\right|$.
\end{proof}

Доказательства лемм \ref{conj_class_2} -- \ref{conj_class_4} основаны на аналогичных рассуждениях.
\begin{claim}\label{conj_class_2}
Для всякой конъюнкции $K\in\mathbb{K}_2$ выполнено
\[\begin{cases}
\left|\Theta^0 \cap N_K\right| \le k/2+3\Delta \\
\left|\Theta^1 \cap N_K\right| = 0 \\
rank^{-}(K) \ge 3\\
rank^{+}(K)  = 0,
\end{cases}
\]
где $\Delta = k/2 - m$.
\end{claim}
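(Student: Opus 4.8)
The plan is to follow the template of Claim~\ref{conj_class_1}, feeding in the normal form of the members of $\mathbb{K}_2$ supplied by Claim~\ref{conj_structure_own_minus}. First I would record that, by case~$a$ of that claim, an arbitrary $K\in\mathbb{K}_2$ has the shape $K=\bar x_c\bar x_{c_1}\bar x_{c_2}\cdots\bar x_{c_{t-1}}$ with $t\ge 3$, consists solely of negative literals, and has $\bar x_c$ as its proper literal. The last two lines of the system are then immediate: $rank^{+}(K)=0$ since $K$ has no positive literal, and $rank^{-}(K)=t\ge 3$.

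Next I would treat $|\Theta^{1}\cap N_K|=0$. If $\tilde\theta(i,j)\in\Theta^{1}\cap N_K$ then $M_i^{j}=1$, and, $K$ being purely negative, the point $\tilde\theta(i,j)$ — which differs from the $i$-th row $M_i$ of $M_f$ only in the coordinate $j$, where it takes the value $0$ — is $0$ on each of $c,c_1,\dots,c_{t-1}$; hence so is $M_i$, apart possibly from the coordinate $j$ itself, so that $j$ is the \emph{only} position of $\{c,c_1,\dots,c_{t-1}\}$ on which $M_i$ can carry a $1$. I would then argue that in every such configuration $\tilde\theta(i,j)$ must in fact lie in $\bar N_f$, using the decomposition of the column of the proper literal $\bar x_c$ into the columns of its solution literals (Lemma~\ref{literal_coverage}), the property built into Claim~\ref{conj_structure_own_minus} that every proper subset of $K\setminus\bar x_c$ is again a solution of $\bar x_c$, and the ``proper literal'' calculus of Lemmas~\ref{literal_min_number_a}--\ref{literal_min_number_b} together with Lemma~\ref{literal_1}; this contradicts $\tilde\theta(i,j)\in\Theta^{1}$, and hence $\Theta^{1}\cap N_K=\varnothing$.

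For $|\Theta^{0}\cap N_K|\le k/2+3\Delta$ I would run the same weight bookkeeping as in Claim~\ref{conj_class_1}. Any $\tilde\theta(i,j)\in\Theta^{0}\cap N_K$ must have $j\notin\{c,c_1,\dots,c_{t-1}\}$, since otherwise the flipped coordinate equals $1$ and falsifies the literal $\bar x_j\in K$; consequently the row $M_i$ itself is $0$ on all of $c,c_1,\dots,c_{t-1}$, i.e.\ $M_i$ lies among the rows of $M_f$ covered by $K$, and by the lemma asserting $\bigl|N_K\cap\bigcup_{j}\tilde\theta(i,j)\bigr|\le 1$ each such row supplies at most one point of $\Theta^{0}\cap N_K$. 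I would then bound the number of rows covered by $K$ through the weights of the columns $M^{c},M^{c_1},\dots,M^{c_{t-1}}$, all of which lie in $[m,k-m]$ with $m=k/2-\Delta$ and interact according to the decomposition of the column of $\bar x_c$: the two contributions that arose separately in Claim~\ref{conj_class_1} — one bounded by $k-m=k/2+\Delta$, the other by $2\Delta$ — now both bear on $\Theta^{0}$, so that $|\Theta^{0}\cap N_K|\le(k-m)+2\Delta=k/2+3\Delta$.

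The genuinely delicate step is the one in the second paragraph: converting ``$M_i$ carries at most one $1$ on $\{c,c_1,\dots,c_{t-1}\}$'' into ``lowering that $1$ lands in $\bar N_f$''. A careless count bounds $|\Theta^{1}\cap N_K|$ only by the number of candidate rows, which need not vanish, so it is precisely the decomposition structure of the column of the proper literal $\bar x_c$ — plus the membership $f\in\Phi^{n,k}_{m}$ entering through the column-weight interval $[m,k-m]$ — that forces the cancellation. Both rank equalities and the $\Theta^{0}$ estimate, by contrast, are routine once the elementary case split on whether $j$ is a variable of $K$ has been carried out.
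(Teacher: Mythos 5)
Your normal-form reduction and the two rank lines are fine, but both central steps of your plan fail, and for the same structural reason. For the claim $|\Theta^1\cap N_K|=0$ you propose to show that the candidate point $\tilde\theta(i,j)$ lies in $\bar N_f$; this can never work, because $K$ is an implicant of $f$, so $N_K\subseteq N_f$ and no point of $N_K$ can lie in $\bar N_f$ --- no decomposition argument can produce that ``cancellation''. Worse, such candidate points exist in bulk: since $\bar x_c$ is the proper literal, its column decomposes over the columns of $x_{c_1},\dots,x_{c_{t-1}}$ (Lemma~\ref{literal_coverage}), so every row with a $1$ in column $c$ has $0$ at every $c_j$; deleting that $1$ yields a point of $N_K$, and there are $w_c\ge m=k/2-\Delta$ such rows. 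Dually, the pivot of your $\Theta^0$ paragraph, ``$M_i$ lies among the rows covered by $K$'', is impossible: the rows of $M_f$ are zeros of $f$ and $K$ is an implicant, so no row is covered by $K$. With the definitions of $\Theta^0,\Theta^1$ taken literally (superscript = the entry being flipped), it is the set of flips $0\to1$ that meets $N_K$ in the empty set (one line: the flipped coordinate is a $1$ of the point, hence outside $K$'s variables, hence the row itself would satisfy $K$), while the bound $k/2+3\Delta$ belongs to the flips that delete a $1$; the claim is thus using the superscripts in the sense of the value at the flipped coordinate after the flip. Your write-up keeps the claim's labels but the definition's literal sense, so one half of it attempts to prove a false statement and the other half counts a vacuous set.

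Independently of the labeling, the quantitative content is missing: the phrase ``$(k-m)+2\Delta$'' is not a count. The argument the paper's one-line proof gestures at, and which your proposal should contain, is this: every point of the nontrivial set arises by deleting the unique $1$ that some row of $M_f$ carries among $c,c_1,\dots,c_{t-1}$ (at most one point per row, cf.\ also the lemma $|N_K\cap\bigcup_j\tilde\theta(i,j)|\le1$). Rows with that $1$ at $c$ number $w_c$. Rows with $0$ at $c$ have all their $1$'s on $K$'s variables among the $c_j$, and since $t\ge3$ the decomposition of $\bar x_c$ provides at least two covering columns of weight $\ge m$ inside the column of $\bar x_c$, so at least $w_{c_1}+w_{c_2}-(k-w_c)$ of these rows carry two or more $1$'s there; hence at most $2(k-w_c)-w_{c_1}-w_{c_2}$ of them carry exactly one. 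Altogether the number of contributing rows is at most $2k-w_c-w_{c_1}-w_{c_2}\le 2k-3m=k/2+3\Delta$, which is exactly where $t\ge3$, the proper-literal decomposition of Claim~\ref{conj_structure_own_minus}, and the column-weight bound of the class $\Phi^{n,k}_m$ enter. Without this inclusion--exclusion step your proposal establishes only $rank^{+}(K)=0$ and $rank^{-}(K)\ge3$.
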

\begin{proof}
По определению класса $\mathbb{K}_2$ для некоторого $t$ и литералов $\bar{x}_i, \bar{x}_{i_1}, \bar{x}_{i_2}, ..., \bar{x}_{i_t}$ представима в виде $K = \bar{x}_i \bar{x}_{i_1}\bar{x}_{i_2}\cdots\bar{x}_{i_t}$, при этом литерал $\bar{x}_i$ собственный; следовательно ${x}_{i_1}, {x}_{i_2}, ..., {x}_{i_t}$ разложение вектора $\bar{x}_i$. Учитывая, что веса всех литералов ограничены снизу числом $k/2 - \Delta$, сверху числом $k/2$, получаем утверждение леммы.
\end{proof}

\begin{claim}\label{conj_class_3}
Для всякой конъюнкции $K\in\mathbb{K}_3$ выполнено
\[
\begin{cases}
    \left|\Theta^0 \cap N_K\right| \le k/2 \\
    \left|\Theta^1 \cap N_K\right| \le 2\Delta \\
    rank^{-}(K) \ge 2 \\
    rank^{+}(K) \ge 1,
\end{cases}
\]
где $\Delta = k/2 - m$.
\end{claim}

\begin{claim}\label{conj_class_4}
Для всякой конъюнкции $K\in\mathbb{K}_4$ выполнено
\[
\begin{cases}
\left|\Theta^0 \cap N_K\right| \le k/2\\
\left|\Theta^1 \cap N_K\right| \le \Delta\\
rank^{-}(K) \ge 1\\
rank^{+}(K) \ge 2,
\end{cases}
\]
где $\Delta = k/2 - m$.
\end{claim}

Следующие две леммы являются следствием того, что отрицания литералов конъюнкции входящей в ДНФ $D$ функции $f$, образуют разбиение $\mathbb{I}_k$. Кроме того, так как функция $f$ приведенная, то никакие два различных ассоциированных с разными столбцами $M_f$ не совпадают.
\begin{claim}\label{conj_class_5}
Для всякой конъюнкции $K\in\mathbb{K}_5$ выполнено
\[
\begin{cases}
    \left|\Theta^0 \cap N_K\right| = 0 \\
    \left|\Theta^1 \cap N_K\right| \le k-1\\
    rank^{+}(K) \ge 2,
\end{cases}
\]
где $\Delta = k/2 - m$.
\end{claim}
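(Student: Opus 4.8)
\noindent\textit{Plan of proof.} By Claim~\ref{conj_structure_rank2} every conjunction of the class $\mathbb{K}_5$ has the form $K=x_ix_j$ with $i\ne j$; it carries no negative literal, so $rank^{-}(K)=0$ and $rank^{+}(K)=2$, which settles the last relation of the statement (the parameter $\Delta$ does not enter here). It remains to bound $|\Theta^{0}\cap N_K|$ and $|\Theta^{1}\cap N_K|$, and the plan is to get both from the single fact that $K$ is admissible for $f$, i.e.\ $N_K\cap Z_f=\varnothing$, together with the description of which points of the neighbourhood $\Theta$ a given conjunction can cover.

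First I would show $\Theta^{0}\cap N_K=\varnothing$. The observation behind the neighbourhood lemma of I.~P.~Chukhrov quoted above is that a conjunction covering a point $\tilde\theta(\ell,s)$ must contain a literal in coordinate $s$, that literal is the unique literal of the conjunction falsified on the $\ell$-th zero of $f$, and its sign is prescribed by the value of $\tilde\theta(\ell,s)$ in that coordinate. For a point of $\Theta^{0}$ the prescribed sign is exactly the one that $K=x_ix_j$ does not have --- the mirror image of Claim~\ref{conj_class_2}, where an all-negative conjunction is barred from $\Theta^{1}$. Hence $K$ covers no point of $\Theta^{0}$; spelled out, a point $v\in\Theta^{0}\cap N_K$ would, on undoing the coordinate change at $s$, become a point violating the $\ell$-th clause of $f$ while still satisfying both $x_i$ and $x_j$, i.e.\ a point of $Z_f$ lying in $N_K$, contradicting admissibility of $K$. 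The care needed here is in making the sign prescription explicit under the paper's conventions for the orientation of $M_f$ and for $\Theta^{0}$ versus $\Theta^{1}$.

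For $|\Theta^{1}\cap N_K|\le k-1$ I would begin with the Chukhrov lemma: for every row $\ell\in[k]$, $\left|N_K\cap\bigcup_{j=1}^{n}\tilde\theta(\ell,j)\right|\le1$. Summing over $\ell$ and using the first bound gives $|\Theta^{1}\cap N_K|=|N_K\cap\Theta|\le k$. To gain the last unit I would show that some row of $M_f$ contributes nothing. By the description above a row $\ell$ yields a point of $N_K$ only when exactly one of $x_i,x_j$ is falsified on the $\ell$-th zero; the rows on which both $x_i$ and $x_j$ are falsified are sterile. But admissibility of $K$ forces every row to falsify at least one of $x_i,x_j$, so if no row were sterile the rows falsifying $x_i$ and those falsifying $x_j$ would partition $[k]$, i.e.\ the columns $M_{[k]}^{i}$ and $M_{[k]}^{j}$ of $M_f$ would be complementary; this is what one has to rule out, and I expect it to follow from the conditions defining $\Phi^{n,k}_m$ (distinctness of columns together with the bound on column weights). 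Granted that, at least one row is sterile and $|\Theta^{1}\cap N_K|\le k-1$.

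The main obstacle is thus not conceptual but this final decrement from $k$ to $k-1$: it hinges on verifying that the two columns attached to $x_i$ and $x_j$ cannot be complementary. Should the definition of $\Phi^{n,k}_m$ only forbid equal columns, an alternative is to exhibit a point of $\Theta^{1}\cap N_K$ adjacent to two distinct zeros at once, which consumes two units of the Chukhrov budget for a single point and again pushes the count below $k$. Once the relevant conventions are pinned down, both cardinality bounds reduce to short coordinate computations, in the same spirit as Claims~\ref{conj_class_1}--\ref{conj_class_4}.
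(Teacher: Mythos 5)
The paper does not actually print a proof of Claim~\ref{conj_class_5}: among Claims~\ref{conj_class_1}--\ref{conj_class_6} only the first two are argued, and Claims~\ref{conj_class_5}--\ref{conj_class_6} are prefaced merely by the remark that, $f$ being nondegenerate, no two distinct columns of $M_f$ coincide. So there is nothing to compare line by line; judged on its own, your reconstruction is the intended one and is essentially correct. The relation $rank^{+}(x_ix_j)=2$ is immediate from Claim~\ref{conj_structure_rank2}; the emptiness of one of the two neighbourhood intersections follows exactly as you say, because undoing the flip returns a zero of $f$ that is still covered by $K$, contradicting $N_K\subseteq N_f$; and you are right to flag the $\Theta^0$ versus $\Theta^1$ convention --- the coefficients used in the proof of Lemma~\ref{lemma_card_size_inequality} show that the two sets are used with labels swapped relative to their displayed definitions, and the statement of Claim~\ref{conj_class_5} follows the same swapped convention as Claim~\ref{conj_class_2}. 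The bound $k-1$ then reduces, as in your plan, to ``at most one covered neighbour per row of $M_f$'' plus the existence of one sterile row, i.e.\ a row $\ell$ with $M_\ell^i=M_\ell^j=0$.

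The one point where your argument is not yet a proof is the justification of that sterile row. You defer the non-complementarity of the columns $M_{[k]}^{i}$ and $M_{[k]}^{j}$ to ``the conditions defining $\Phi^{n,k}_m$ (distinctness of columns together with the bound on column weights)''. Neither condition suffices: two distinct columns whose weights lie between $m$ and $k-m$ can perfectly well be mutual complements (for instance, two complementary columns of weight $k/2$). What actually excludes this is the irreducibility hypothesis: condition~3 in the definition of a reduced function, implemented by the reduction~(\ref{formula_reduction}), strips from the kernel any pair of mutually inverse columns (such a pair would be split off into a $D_2$-block), and the remark preceding Claims~\ref{conj_class_5}--\ref{conj_class_6} is alluding to exactly this property rather than to mere distinctness. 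Your fallback --- finding a point of $\Theta^1\cap N_K$ adjacent to two distinct zeros so as to consume two units of the budget of Chukhrov's lemma --- does not rescue the complementary case, since it would require two zeros differing precisely in coordinates $i$ and $j$, which complementarity of the columns does not supply. Replace the appeal to $\Phi^{n,k}_m$ by an appeal to the reduced-form condition on mutually inverse columns and the proof is complete.
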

\begin{claim}\label{conj_class_6}
Для всякой конъюнкции $K\in\mathbb{K}_6$ выполнено
\[
\begin{cases}
\left|\Theta^0 \cap N_K\right| \le k/2 \\
\left|\Theta^1 \cap N_K\right| \le k/2+\Delta \\
rank^{-}(K) \ge 1 \\
rank^{+}(K) \ge 1,
\end{cases}
\]
где $\Delta = k/2 - m$.
\end{claim}

Обозначим $|\mathbb{K}_1| = \mu_1, |\mathbb{K}_2| = \mu_2, ..., |\mathbb{K}_6| = \mu_6$. Основой последующих рассуждений служит
\begin{lemma}\label{lemma_card_size_inequality}
Для всякой ДНФ реализации $D$ функции $f\in\Phi^{n,k}_{m}$ выполнены неравенства
\begin{gather}
(1+3\varepsilon)\mu_2 + 2\varepsilon\mu_1 + \mu_6 + \mu_4 + \mu_3 > n(1-\varepsilon)\label{inequality_1_3} \\
(1+\varepsilon)\mu_1 + (1+\varepsilon)\mu_6 +2 \mu_5 +\varepsilon \mu_4 + 2\varepsilon\mu_3 > n\label{inequality_2_3_4},
\end{gather}
при $\varepsilon = 2\Delta/k$, $\Delta = k/2 - m$.
\end{lemma}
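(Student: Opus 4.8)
\medskip

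\noindent\textbf{Plan of proof.}
The plan is to estimate in two ways the sums $S_0=\sum_{K\in D}|\Theta^0\cap N_K|$ and $S_1=\sum_{K\in D}|\Theta^1\cap N_K|$. From above: fix the partition of the conjunctions of $D$ into the classes $\mathbb{K}_1,\dots,\mathbb{K}_6$ provided by the lemma stating that every conjunction of $D$ lies in one of $\mathbb{K}_1,\dots,\mathbb{K}_6$, so $\mu_1+\dots+\mu_6=|D|$. Writing $(b^0_i)_{i=1}^6=(2\Delta,\ k/2+3\Delta,\ k/2,\ k/2,\ 0,\ k/2)$ and $(b^1_i)_{i=1}^6=(k/2+\Delta,\ 0,\ 2\Delta,\ \Delta,\ k-1,\ k/2+\Delta)$ for the upper bounds on $|\Theta^0\cap N_K|$ and $|\Theta^1\cap N_K|$ given by Claims~\ref{conj_class_1}--\ref{conj_class_6}, one gets $S_0\le\sum_{i=1}^6\mu_i b^0_i$ and $S_1\le\sum_{i=1}^6\mu_i b^1_i$. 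Substituting $\varepsilon=2\Delta/k$ one verifies that $\sum_{i=1}^6\mu_i b^0_i=\frac{k}{2}\bigl((1+3\varepsilon)\mu_2+2\varepsilon\mu_1+\mu_6+\mu_4+\mu_3\bigr)$ and $\sum_{i=1}^6\mu_i b^1_i\le\frac{k}{2}\bigl((1+\varepsilon)\mu_1+(1+\varepsilon)\mu_6+2\mu_5+\varepsilon\mu_4+2\varepsilon\mu_3\bigr)$, the difference in the second relation being $\mu_5\ge 0$ (because $b^1_5=k-1<k$). Hence the left-hand sides of \eqref{inequality_1_3} and \eqref{inequality_2_3_4} are at least $\frac{2}{k}S_0$ and $\frac{2}{k}S_1$ respectively, and since $mn=\frac{k}{2}(1-\varepsilon)n$ it suffices to prove $S_0>mn$ and $S_1>\frac{k}{2}n$.

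For these lower bounds I would argue as follows. Since the definitions of $\Theta^0$ and $\Theta^1$ subtract $\bar N_f$, every point of $\Theta^0\cup\Theta^1$ is a $1$-point of $f$; as $D$ is a DNF of $f$, each such point belongs to $N_K$ for at least one $K\in D$, so $S_0\ge|\Theta^0|$ and $S_1\ge|\Theta^1|$. It then remains to show $|\Theta^0|>mn$ and $|\Theta^1|>\frac{k}{2}n$, which I would do by counting boundary points directly: each pair $(i,j)$ with $M_i^j=0$ yields the candidate point $\tilde\theta(i,j)$ of $\Theta^0$ and each pair with $M_i^j=1$ a candidate of $\Theta^1$, the number of such pairs being controlled from below by the column-weight condition in the definition of $\Phi^{n,k}_m$; one then discards the candidates falling into $\bar N_f$ and checks that the surviving points of $\Theta^0$ (respectively $\Theta^1$) are essentially pairwise distinct. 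The last point is where the assumptions that no two rows and no two columns of $M_f$ coincide enter, ruling out the collisions $\tilde\theta(i,j)=\tilde\theta(i',j')$ that would otherwise pull $|\Theta^0|$ or $|\Theta^1|$ below the required thresholds, with Dzhukova's lemma available to limit fan overlaps if needed.

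Finally one chains the inequalities: $\frac{k}{2}$ times the left-hand side of \eqref{inequality_1_3} is $\ge S_0\ge|\Theta^0|>mn=\frac{k}{2}(1-\varepsilon)n$, giving \eqref{inequality_1_3}; and $\frac{k}{2}$ times the left-hand side of \eqref{inequality_2_3_4} is $\ge S_1\ge|\Theta^1|>\frac{k}{2}n$, giving \eqref{inequality_2_3_4}. I expect the real obstacle to be the last step of the lower-bound count, namely the \emph{strict} estimates $|\Theta^0|>mn$ and especially $|\Theta^1|>\frac{k}{2}n$: a crude bound such as $|\Theta^0|\ge mn$ from the column weights alone gives neither strictness nor anything for $\Theta^1$, so one must use the full strength of the definition of $\Phi^{n,k}_m$ to control simultaneously the points lost to $\bar N_f$ and the points identified under $\tilde\theta(i,j)=\tilde\theta(i',j')$, and possibly also the slack term $\mu_5$ discarded when passing from $\sum_{i}\mu_i b^1_i$ to $\frac{k}{2}$ times the left-hand side of \eqref{inequality_2_3_4}.
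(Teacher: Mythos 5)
Your proposal is essentially the paper's own argument: the paper likewise double-counts $\sum_{K\in D}|\Theta^{\sigma}\cap N_K|$ for $\sigma=0,1$, bounds each summand from above by the class-wise estimates of Claims~\ref{conj_class_1}--\ref{conj_class_6}, bounds the total from below by $|\Theta^{\sigma}|$ via the column-weight condition defining $\Phi^{n,k}_m$ (asserting, just as you do, $|\Theta^{\sigma}|\ge(k/2-\Delta)n$ resp.\ $\ge kn/2$ without a detailed collision analysis), and divides by $k/2$ to obtain \eqref{inequality_1_3} and \eqref{inequality_2_3_4}. The one visible discrepancy --- which of $\Theta^{0},\Theta^{1}$ is paired with the lower bound $mn$ and which with $kn/2$ --- only mirrors the paper's own inconsistent use of these superscripts between the definition, the claims and the proof of the lemma; your coefficients and the resulting arithmetic coincide with the paper's.
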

\begin{proof}
Заметим, что $\Theta^1 \ge (k/2 - \Delta) n$, с другой стороны в соответствии с утверждениями \ref{conj_class_1}--\ref{conj_class_6} оценивается сверху следующим образом
\[(k/2 + 3\Delta) + 2 \Delta \mu_1 + k\mu_6/2 + k\mu_4/2 + k\mu_3/2 > \Theta^1 \ge (k/2 - \Delta) n\]
Поделив обе части неравенства на $k/2$, получим неравенство \ref{inequality_1_3}.

Аналогично оценим $\Theta^0$:
\[(k/2+\Delta)\mu_1 + (k/2 +\Delta)\mu_6 + (k-1)\mu_5 + \Delta\mu_4 + 2\Delta\mu_3 > \Theta^0 \ge n\]
Из полученного неравенства следует неравенство \ref{inequality_2_3_4}.
\end{proof}

\begin{lemma}\label{lemma_rank_inequality}
Для произвольной ДНФ реализации $D$ функции $f$ класса $\Phi^{n,k}_m$~справедливы неравенства
\begin{gather}
rank^{+}\; D \ge \max\{2n-\mu_1,\; \mu_1 + 2\mu_5 + \mu_6 + 2\mu_4 + \mu_3\}\label{inequality_rank_plus}\\
rank^{-}\; D \ge \max\{2n - \mu_2 - \mu_4 - \mu_3,\; 3\mu_2 + 2\mu_1 + \mu_6 + \mu_4 + 2\mu_3\}\label{inequality_rank_minus}
\end{gather}
\end{lemma}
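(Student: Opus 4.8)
The plan is to derive each line of the statement as a maximum of two independent lower bounds, so I would prove separately
\[
rank^{+}D\ \ge\ \mu_1+\mu_3+2\mu_4+2\mu_5+\mu_6,\qquad rank^{+}D\ \ge\ 2n-\mu_1,
\]
\[
rank^{-}D\ \ge\ 2\mu_1+3\mu_2+2\mu_3+\mu_4+\mu_6,\qquad rank^{-}D\ \ge\ 2n-\mu_2-\mu_3-\mu_4,
\]
and then pass to maxima. Throughout I use $rank^{\pm}D=\sum_{K\in D}rank^{\pm}(K)$, the sum over the conjunctions of $D$, together with the (unlabeled) lemma just preceding Claim~\ref{conj_class_1}, by which the conjunctions of the minimal DNF $D$ split into the classes $\mathbb{K}_1,\dots,\mathbb{K}_6$ of sizes $\mu_1,\dots,\mu_6$.

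The first bound on each line is then immediate: summing over the six classes the per-conjunction estimates for $rank^{+}(K)$ and $rank^{-}(K)$ supplied by Claims~\ref{conj_class_1}--\ref{conj_class_6} reproduces the two stated sums exactly ($\mathbb{K}_2$ contributes nothing to $rank^{+}$, $\mathbb{K}_5$ nothing to $rank^{-}$, and the remaining coefficients match).

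The second bound on each line is the substantive part; I would prove it by counting occurrences of a single literal. For $i\in[n]$ let $p_i$ (resp.\ $q_i$) be the number of conjunctions of $D$ containing $x_i$ (resp.\ $\bar x_i$), so that $rank^{+}D=\sum_i p_i$ and $rank^{-}D=\sum_i q_i$; each literal of $f$ occurs in some conjunction of $D$ (Lemma~\ref{literal_1}), so $p_i,q_i\ge1$. By Lemma~\ref{literal_min_number_b} applied with $t=1$, together with its corollary phrased through the notion of a locked literal, $x_i$ occurs in at least two conjunctions of $D$ unless it is \emph{locked} in one of them; and inspecting the conjunction shapes in Claims~\ref{conj_structure_own_plus} and~\ref{conj_structure_rank2}, a positive literal can be locked only as the own literal of a conjunction from $\mathbb{K}_1$ (in the rank-$2$ classes $\mathbb{K}_5,\mathbb{K}_6$ the single remaining literal cannot cover the own literal, since the columns of $M_f$ are pairwise distinct and obey the restrictions imposed on $\Phi^{n,k}$). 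Since distinct $\mathbb{K}_1$-conjunctions carry distinct own positive literals (Lemma~\ref{literal_1}), at most $\mu_1$ indices $i$ have $p_i<2$, whence $rank^{+}D\ge 2(n-\mu_1)+\mu_1=2n-\mu_1$. Symmetrically, by Claims~\ref{conj_structure_own_minus} and~\ref{conj_structure_rank2} a negative literal can be locked only as the own literal of a conjunction from $\mathbb{K}_2\cup\mathbb{K}_3\cup\mathbb{K}_4$, so at most $\mu_2+\mu_3+\mu_4$ indices have $q_i<2$ and $rank^{-}D\ge 2n-\mu_2-\mu_3-\mu_4$. Taking maxima on each line gives \eqref{inequality_rank_plus} and \eqref{inequality_rank_minus}.

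The step I expect to be the main obstacle is the locked-literal classification above: one must verify carefully, from the shapes in Claims~\ref{conj_structure_own_plus}--\ref{conj_structure_rank2} and the primality of the conjunctions of a minimal DNF, that within a conjunction of $D$ only its own literal can be locked, that this happens exactly for $\mathbb{K}_1$ in the positive case and for $\mathbb{K}_2\cup\mathbb{K}_3\cup\mathbb{K}_4$ in the negative case, and hence that every index $i$ with $p_i=1$ (resp.\ $q_i=1$) is charged to a distinct conjunction of the appropriate class. With that settled, the rest is the routine arithmetic indicated above.
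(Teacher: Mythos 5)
Your proposal is correct and follows essentially the same route as the paper: the first bound on each line comes from summing the per-conjunction rank estimates of Claims \ref{conj_class_1}--\ref{conj_class_6}, and the second from counting literal occurrences via Lemmas \ref{literal_min_number_a}--\ref{literal_min_number_b}, with the locked own literals of $\mathbb{K}_1$ (resp.\ $\mathbb{K}_2\cup\mathbb{K}_3\cup\mathbb{K}_4$) accounting for the subtracted terms. You actually spell out the locked-literal classification and the negative case more explicitly than the paper, which dismisses them with ``by analogy.''
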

\begin{proof}
Заметим, что при заданном разбиении на классы $\mathbb{K}_1$ -- $\mathbb{K}_6$ число собственных отрицательных литералов равно $\mu_1$. По леммам \ref{literal_min_number_a} и \ref{literal_min_number_b} все несобственные литералы встречаются как минимум два раза, следовательно, $rank^{+} \ge \mu_1 + 2 (n-\mu_1)$. Кроме того, число положительных литералов, в силу утверждений \ref{conj_class_1}--\ref{conj_class_6} оценивается снизу выражением $\mu_1 + 2\mu_5 + \mu_6 + 2\mu_4 + \mu_3$.

Таким же образом получается оценка в неравенстве \ref{inequality_rank_plus}.
\end{proof}

\begin{theorema}\label{theorem_main}
Ранг ДНФ реализации $D$ функции $f$ класса $\Phi^{n,k}_m$ ограничен снизу выражением
\[rank\;D > \frac{10}{3}n - \frac{5n\varepsilon}{3(1+\varepsilon)},\;\text{при}\quad\varepsilon = 2\frac{(k/2 - m)}{k} \le \frac{1}{4}\]
\[rank\;D > \frac{10}{3}n - \frac{13n\varepsilon}{9 + 3\varepsilon},\;\text{при}\quad\frac{1}{4} < \varepsilon < \frac{1}{3}.\]
\end{theorema}
\begin{proof}
Для доказательства утверждения теоремы воспользуемся леммами \ref{lemma_card_size_inequality} и \ref{lemma_rank_inequality}. Обозначим

\begin{multline*}
L(D) := \max\{2n-\mu_1,\; \mu_1 + 2\mu_5 + \mu_6 + 2\mu_4 + \mu_3\} + \\ \max\{2n - \mu_2 - \mu_4 - \mu_3,\; 3\mu_2 + 2\mu_1 + \mu_6 + \mu_4 + 2\mu_3\}.
\end{multline*}

Далее нас будет интересовать минимальное значение указанной функции при ограничениях, определяемых леммой \ref{lemma_card_size_inequality}. Для доказательства теоремы рассмотрим следующие случаи:
\begin{enumerate}
\item[$a.$] В условиях рассматриваемого случая:

    $\begin{cases}
        2n - \mu_1 \ge \mu_1 + 2\mu_5 + \mu_6 + 2\mu_4 + \mu_3\\
        2n - \mu_2 - \mu_4 - \mu_3 \ge 3\mu_2 + 2\mu_1 + \mu_6 + \mu_4 + 2\mu_3
    \end{cases}$

    Вместе с неравенствами леммы \ref{lemma_card_size_inequality}, получаем следующую систему неравенств
\[    \begin{cases}
&     \mu_1 + \mu_6/2 + \mu_5 + \mu_4 + \mu_3/2 \le n\\
&    \mu_2 + \mu_1/2 + \mu_6/4 + \mu_4/2 + 3\mu_3/4 \le n/2\\
&    -(1+3\varepsilon)\mu_2 - 2\varepsilon\mu_1 - \mu_6 - 2\mu_4 - \mu_3 < -n(1 - \varepsilon)\\
&    -(1+\varepsilon)\mu_1 - (1+\varepsilon)\mu_6 - 2\mu_5 - \varepsilon\mu_4 - 2\varepsilon\mu_3 < -n
    \end{cases}\]

    Первое неравенство системы умножим на 2, второе на 4, последнее на $(1-\varepsilon)$; сложив с третьим, получим
    \begin{multline*}
    3(1-\varepsilon)(\mu_2 + \mu_1 + \mu_4 + \mu_3) + 2\varepsilon\mu_5 + \varepsilon^2 \mu_6 + (\varepsilon + \varepsilon^2)\mu_1 +\\
    (2\varepsilon + \varepsilon^2)\mu_4 + (\varepsilon + 2\varepsilon^2)\mu_3 < 2 (1 + \varepsilon)n
    \end{multline*}
    Откуда
    \[\mu_2 + \mu_1 + \mu_4 + \mu_3 < \frac{2}{3}n \left(1 + 2\frac{\varepsilon}{1 - \varepsilon}\right)\]
    Следовательно $L(D) = 4n - \mu_2 -\mu_1 - \mu_4 - \mu_3 > \frac{10}{3}n - \frac{4n\varepsilon}{3(1 - \varepsilon)}$.
\item[$b.$]
В условиях рассматриваемого случая:

    $\begin{cases}
        2n - \mu_1 \ge \mu_1 + 2\mu_5 + \mu_6 + 2\mu_4 + \mu_3\\
        2n - \mu_2 - \mu_4 - \mu_3 <  3\mu_2 + 2\mu_1 + \mu_6 + \mu_4 + 2\mu_3
    \end{cases}$

    Оценим функцию $L(D)$:
    \[L(D) \ge 2n + 3\mu_2 + \mu_1 + \mu_6 + \mu_4 + 2\mu_3.\]
    Воспользовавшись неравенствами \ref{inequality_1_3}--\ref{inequality_2_3_4}, получим
    \[    \begin{cases}
&    \mu_1 + \mu_6/2 + \mu_5 + \mu_4 + \mu_3/2 \le n\\
&    -\mu_2 - \mu_1/2 - \mu_6/4 - \mu_4/2 - 3\mu_3/4 \le -n/2\\
&    -(1+3\varepsilon)\mu_2 - 2\varepsilon\mu_1 - \mu_6 - 2\mu_4 - \mu_3 < -n(1 - \varepsilon)\\
&    -(1+\varepsilon)\mu_1 - (1+\varepsilon)\mu_6 - 2\mu_5 - \varepsilon\mu_4 - 2\varepsilon\mu_3 < -n
    \end{cases}\]
    Откуда
    \[
    3(1+\varepsilon)\mu_2 + (1+\varepsilon)\mu_1 + (1+\varepsilon)\mu_6 + (1+\varepsilon)\mu_4 + 2(1+\varepsilon)\mu_3 >   \frac{n}{3}(4-\varepsilon).
    \]

    Следовательно $L(D) > \frac{10}{3}n - \frac{5n\varepsilon}{3(1+\varepsilon)}$.
\item[$c.$]
    $\begin{cases}
        2n - \mu_1 < \mu_1 + 2\mu_5 + \mu_6 + 2\mu_4 + \mu_3\\
        2n - \mu_2 - \mu_4 - \mu_3 \ge  3\mu_2 + 2\mu_1 + \mu_6 + \mu_4 + 2\mu_3
    \end{cases}$

    Оценим функцию $L(D)$:
    \[L(D) \ge 2n - \mu_2 + \mu_1 + \mu_6 + 2\mu_5 + 2\mu_4 + \mu_3\]

    Из определяющей максимум системы неравенств, получаем
    \begin{gather*}
        \mu_5 + \mu_4/2 > 2\mu_2 + \mu_3/2,\\
        3(1-\varepsilon)\mu_2/2 + \varepsilon\mu_1 + \mu_6/2 + \mu_4/2 + 2\mu_3/2 > n(1-\varepsilon)/2,\\
        (1-\varepsilon)(\mu_1 + \mu_5 + \mu_6/2 + \mu_4 + \mu_3/2) > n(1-\varepsilon).
    \end{gather*}
    Откуда $L(D) > 2n + 3n(1-\varepsilon)/2 + \mu_2(1+\varepsilon)/2 + \mu_3(1+2\varepsilon)/2 > \frac{7}{2}n - \frac{3n\varepsilon}{2}$.
\item[$d.$]
    $\begin{cases}
        2n - \mu_1 < \mu_1 + 2\mu_5 + \mu_6 + 2\mu_4 + \mu_3\\
        2n - \mu_2 - \mu_4 - \mu_3 < 3\mu_2 + 2\mu_1 + \mu_6 + \mu_4 + 2\mu_3
    \end{cases}$

    Оценим функцию $L(D)$:
    \[L(D) \ge 3\mu_2 + 3\mu_1 + 2\mu_6 + 2\mu_5 + 3\mu_4 + 3\mu_3.\]

    Система неравенств в рассматриваемом случае имеет вид
\[    \begin{cases}
&    -\mu_1 - \mu_6/2 - \mu_5 - \mu_4 - \mu_3/2 \le -n\\
&   -\mu_2 - \mu_1/2 - \mu_6/4 - \mu_4/2 - 3\mu_3/4 \le -n/2\\
&    -(1+3\varepsilon)\mu_2 - 2\varepsilon\mu_1 - \mu_6 - 2\mu_4 - \mu_3 < -n(1 - \varepsilon)\\
&    -(1+\varepsilon)\mu_1 - (1+\varepsilon)\mu_6 - 2\mu_5 - \varepsilon\mu_4 - 2\varepsilon\mu_3 < -n
    \end{cases}\]

    Умножим первое неравенство на 4, второе на 8 и сложим с последними двумя
    \begin{multline}3(1 + \varepsilon/3)\mu_2 + 3(1 + \varepsilon/3)\mu_1 + 2(1 + \varepsilon/6)\mu_6 + \\ 2\mu_5 + 3(1 + \varepsilon/9)\mu_4 + 3(1 + 2\varepsilon/9)\mu_3 > \frac{10 - n\varepsilon}{3}
    \end{multline}
    Следовательно, $L(D) > \frac{10}{3}n - \frac{13n\varepsilon}{9 + 3\varepsilon}$.
\end{enumerate}
Заметим, что оценки в случаях $b$ и $d$ совпадают при $\varepsilon = 1/4$ и равны $3n$. При $\varepsilon < 1/4$ оценка случая $b$ является наименьшей, что завершает доказательство теоремы.
\end{proof}

\begin{theorema}\label{bound_lower}
Минимальная ДНФ почти всех приведенных функций из $\Phi^{m,k}$ при $\log m \le k/32$ содержит не менее
\[
\frac{10}{3}m - \frac{5m\left(1 - \alpha\sqrt{\frac{2\log m}{k}}\right)}{3+3\alpha\sqrt{\frac{2\log m}{k}}}
\]
литералов, а при $k/162 < \log m < k/32$ число литералов не меньше
\[
\frac{10}{3}m - \frac{13m\left(1 - \alpha\sqrt{\frac{2\log m}{k}}\right)}{9+3\alpha\sqrt{\frac{2\log m}{k}}}
\]
где $\alpha$ произвольное положительное число меньшее 1, $\log x$ обозначен натуральный логарифм числа $x$.
\end{theorema}
\begin{proof}

Выберем $\lambda = \alpha\sqrt{\frac{2\log m}{k}}$ при $\alpha < 1$. Воспользуемся неравенством Чернова
\[
    \sum^{k/2-\lambda}_{t=0}{k\choose t}\leqslant 2^ke^{-2\lambda^2/k}.
\]

Математическое ожидание числа столбцов веса меньше $k/2 - \lambda$ в случайной функции $f\in\Phi^{n,k}$
\[
E[\text{числа столбцов веса меньше $k/2 - \lambda$}] \le m 2^{k+1} e^{-2\lambda^2/k} = o(1).
\]

Из неравенства Маркова для случайной функции $f$ следует
\[
Pr\left[f \not\in \Phi^{m,k}_{k/2 - \alpha\sqrt{\frac{2\log m}{k}}} \mid f \in \Phi^{m,k}\right] = o(1)
\]

Применение предыдущей теоремы завершает доказательство.
\end{proof}

\begin{example}
Рассмотрим булеву функцию $h$ заданную матрицей нулей $H_k$ с максимальным числом различных (с точностью до отрицания) столбцов, каждый из которых имеет вес не менее $k/2 - \sqrt{k\ln k}$. В соответствии с неравенством Чернова число столбцов в такой матрице не меньше $2^{k-1}(1 - 2/k)$. По теореме \ref{theorem_main} число литералов в минимальной ДНФ такой функции строго больше
\[2^{k-1}\left(\frac{10}{3} - \frac{5}{3}\frac{2\ln k}{\sqrt{k} + 2\ln k}\right)\left(1 - \frac{2}{k}\right) = \frac{10}{3}\cdot2^{k-1}(1+o(1))\]
Таким образом, число литералов в минимальной ДНФ указанной функции существенно больше, чем в минимальной ДНФ полной функции с тем же числом нулей.
\end{example}

\section{Заключение}
В настоящей работе получены нижние оценки сложности ДНФ булевых функций, задаваемых матрицами нулей, не содержащими столбцы с малым числом единиц (нулей).

Полученные оценки позволяют строить высокие нижние оценки для большого числа приведенных функций высокой размерности с малым числом нулей.

В работе показано, что ДНФ полной булевой функции не является асимптотически максимальной по сложности в своем классе; тем самым решен один из наиболее важных открытых вопросов теории сложности ДНФ булевых функций с малым числом нулей.

\bibliographystyle{chicago}
\bibliography{dnflow}

\begin{thebibliography}{}

\bibitem[\protect\citeauthoryear{Журавлёв}{Журавлёв}{1978}]{zh78}
Журавлёв, Ю.~И. (1978).
\newblock Об алгебраическом подходе к решению задач распознавания или
  классификации.
\newblock {\em Проблемы кибернетики\/}~{\em 33}, 5--68.

\bibitem[\protect\citeauthoryear{Журавлёв and Коган}{Журавлёв and
  Коган}{1986}]{zk86}
Журавлёв, Ю.~И. and А.~Ю. Коган (1986).
\newblock Алгоритм построения дизъюнктивной нормальной формы, эквивалентной
  произведению левых частей булевых уравнений нельсоновского типа.
\newblock {\em Журнал вычислительной математики и математической физики\/}~{\em
  26\/}(8), 1243--1249.

\bibitem[\protect\citeauthoryear{Журавлёв and Коган}{Журавлёв and
  Коган}{1985}]{zk85dan}
Журавлёв, Ю.~И. and А.~Ю.~Коган (1985).
\newblock Реализация булевых функций с малым числом нулей дизъюнктивными
  нормальными формами и смежные задачи.
\newblock {\em Доклады АН СССР\/}~{\em 285\/}(4), 795--799.

\bibitem[\protect\citeauthoryear{Максимов}{Максимов}{2012}]{me12b}
Максимов, Ю.~И. (2012).
\newblock Сравнительный анализ сложности булевых функций с малым числом нулей.
\newblock {\em Доклады Академии Наук\/}~{\em 447\/}(6), 607--609.

\bibitem[\protect\citeauthoryear{Дьяконов}{Дьяконов}{2001}]{dj01}
Дьяконов, А.~Г. (2001).
\newblock Реализация одного класса булевых функций с малым числом нулей
  тупиковыми дизъюнктивными нормальными формами.
\newblock {\em Журнал вычислительной математики и математической физики\/}~{\em
  41\/}(5), 828--835.

\bibitem[\protect\citeauthoryear{Яблонский}{Яблонский}{2010}]{ja10}
Яблонский, С.~В. (2010).
\newblock {\em Введение в дискретную математику: Учебное пособие для вузов,
  6-ое изд.}
\newblock М: Высшая школа.

\bibitem[\protect\citeauthoryear{Umans}{Umans}{1998}]{um98}
Umans, C. (1998).
\newblock The minimum equivalent dnf problem and shortest implicants.
\newblock In {\em 39th Annual IEEE Symposium on Foundations of Computer
  Science}, pp.\  556--563.

\bibitem[\protect\citeauthoryear{Board and Pitt}{Board and Pitt}{1992}]{bp92}
Board, R. and L.~Pitt (1992).
\newblock On the necessity of occam algorithms.
\newblock {\em Theoretical Computer Science\/}~{\em 100}, 157--184.

\bibitem[\protect\citeauthoryear{Boros, Crama, Hammer, Ibaraki, Kogan, and
  Makino}{Boros et~al.}{2011}]{boros11}
Boros, E., Y.~Crama, P.~L. Hammer, T.~Ibaraki, A.~Kogan, and K.~Makino (2011).
\newblock Logical analysis of data: classification with justification.
\newblock {\em Annals of Operations Research\/}~{\em 188\/}(1), 33--61.

\bibitem[\protect\citeauthoryear{Masek}{Masek}{1979}]{mas79}
Masek, W.~J. (1979).
\newblock Some np--complete set covering problems.
\newblock {\em Unpublished Manuscript\/}.

\bibitem[\protect\citeauthoryear{Mubayi, Turan, and Zhao}{Mubayi
  et~al.}{2006}]{mu06}
Mubayi, D., G.~Turan, and Y.~Zhao (2006).
\newblock The dnf exception problem.
\newblock {\em Theoretical Computer Science\/}~{\em 352\/}(1--3), 85--96.

\end{thebibliography}

\end{document}